\newtheorem{theo}{Theorem}[section]
\newtheorem{prop}[theo]{Proposition}
\newtheorem{lemma}[theo]{Lemma}
\newtheorem{claim}[theo]{Claim}
\theoremstyle{definition}
\newtheorem{defi}[theo]{Definition}
\theoremstyle{remark}
\newcommand{\BC}{{\mathbb{C}}}
\newcommand{\BM}{{\mathbb{M}}}
\newcommand{\BN}{{\mathbb{N}}}
\newcommand{\BQ}{{\mathbb{Q}}}
\newcommand{\BZ}{{\mathbb{Z}}}
\newcommand{\CC}{{\mathcal{C}}}
\newcommand{\CF}{{\mathcal{F}}}
\newcommand{\CM}{{\mathcal{M}}}
\newcommand{\CR}{{\mathcal{R}}}
\newcommand{\CT}{{\mathcal{T}}}
\newcommand{\Hom}{\mathop{\rm Hom}\nolimits}
\newcommand{\Colim}{{\rm Colim}}
\newcommand{\ra}{\rightarrow}
\newcommand{\rap}{\stackrel{+}{\rightarrow}}
\newcommand{\Sch}{\mathsf{Sch}}
\newcommand{\Sm}{\mathsf{Sm}}
\newcommand{\DA}{\mathop{\mathbf{DA}}\nolimits}
\newcommand{\MM}{\mathop{\mathbf{MM}}\nolimits}
\newcommand{\id}{{\rm id}}
\newcommand{\Gr}{{\rm Gr}}
\newcommand{\image}{\mathop{{\rm Im}}\nolimits}
\newcommand{\Coker}{\mathop{\rm Coker}\nolimits}
\newcommand{\Ker}{\mathop{\rm Ker}\nolimits}
\newcommand{\Jac}{\mathop{\rm Jac}\nolimits}
\newcommand{\DG}{\mathcal{DG}}
\newcommand{\Cone}{{\rm Cone}}
\newcommand{\et}{\mathrm{\acute{e}t}}
\newcommand{\coh}{\mathrm{coh}}
\newcommand{\red}{\mathrm{red}}
\newcommand{\sm}{\mathrm{sm}}
\newcommand{\perf}{{\rm perf}}
\date{\today}
\title{Constructible $1$-motives and exactness}
\author{Simon Pepin Lehalleur}
\thanks{The author thanks the Freie Universit\"at Berlin, where the research on this work was conducted. The author also acknowledges the support of the Einstein Foundation, through the Einstein visiting Fellowship 0419745104 ``Algebraic Entropy, Algebraic Cycles'' of Professor V. Srinivas.}
\begin{document}

\newcommand{\TODO}{{\color{red} TODO }}
\newcommand{\REF}{{ \color{green} REF}}

\maketitle

\begin{abstract}
We prove that arbitrary pullbacks, as well as Betti and \'{e}tale realisation functors, are t-exact for the constructible motivic t-structure on the category of cohomological 1-motives over a base scheme.
\end{abstract}

\tableofcontents

\section{Introduction}

This paper takes place in the context of triangulated categories of mixed motivic sheaves in the sense of Morel-Voevodsky, and is a follow-up to \cite{phd_paper}. Let $S$ be a finite dimensional noetherian excellent scheme $S$. In \cite{phd_paper}, we constructed a candidate for the motivic t-structure on the triangulated category $\DA^{1}(S)$ of cohomological $1$-motives with rational coefficients over a noetherian finite-dimensional scheme $S$. In this note, we prove (under a mild hypothesis on $S$) that pullbacks along arbitrary morphisms are t-exact (\ref{main_theorem} \ref{pullback}), and that Betti and \'{e}tale realisation functors are t-exact if the target categories of realisation functors are equipped with their standard t-structure  (Theorem \ref{main_theorem} \ref{pullback} \ref{Realisation_Etale} \ref{Realisation_Betti}). Along the way, we prove that the t-structure on $\DA^{1}(S)$ restricts to the category $\DA^{1}_{c}(S)$ of compact cohomological $1$-motives (Theorem \ref{main_theorem} \ref{compactness}). 

Let $\MM^{1}_{c}(S)$ be the category of constructible cohomological $1$-motives, that is, the heart of the restricted t-structure on $\DA^{1}_{c}(S)$. In the course of the proof of the results above, we show that, for any $M\in\MM^1_c(S)$, there exists a stratification of the base $S$ by regular subschemes such that the restrictions of $M$ to strata are Deligne $1$-motives (Theorem \ref{main_theorem} \ref{structure}).

Theorem \ref{main_theorem} \ref{compactness} has been obtained previously by V. Vaish in \cite{vaish_gluing}. Vaish's approach relies on an elegant combination of the gluing procedure for t-structures of \cite{BBD} and the \enquote{weight truncation} t-structures of \cite{Morel}. He first gives an alternative construction of the functor $\omega^1:\DA^\coh_c(S)\ra \DA^1_c(S)$ (see Definition \ref{def:omega1} and Theorem \ref{thm:finiteness_omega1}) by gluing the analoguous functors $\omega^1:\DA^\coh_c(k(s))\ra \DA^1_c(k(s))$ for all points $s\in S$ (which exist by \cite{bvk}), and then uses  gluing data of the form $(j_!\dashv j^*\dashv \omega^1 j_*,i^*\dashv i_*\dashv \omega^1 i^!)$ for $j:U\ra S\leftarrow Z:i$ complementary open and closed immersions to glue together the t-structures on the $\DA^1_c(k(s))$ (which exist by \cite{orgogozo}). It is not clear to us how to prove the results of this note using the approach of \cite{vaish_gluing}; we plan to come back to this point and to combine the strengths of our approaches in a future work.

\section*{Acknowledgements}

This work was started during my PhD thesis under the supervision of Joseph Ayoub at the University of Z\"urich. I would like to thank him again for generously sharing his ideas and expertise on mixed motives. I have also had useful exchanges on the topic with Annette Huber, Florian Ivorra, Vaibhav Vaish and Johann Haas.

\section{Conventions}
All schemes are assumed to be finite dimensional, noetherian and excellent. Unless specified, smooth morphisms are assumed to be separated of finite type. The notation $\Sm/S$ denotes the category of all smooth $S$-schemes considered as a site with the \'etale topology.

\begin{defi}
We say that a scheme $S$ allows resolution of singularities by alterations if for any separated $S$-scheme $X$ of finite type and any nowhere dense closed subset $Z\subset X$, there is a projective alteration $g:X''\ra X$ with $X'$ regular and such that $g^{-1}(Z)$ is a strict normal crossing divisor.  
\end{defi}
 The best result available in this direction is due to Temkin \cite[Theorem 1.2.4]{Temkin_distillation}: any $S$ which is of finite type over a quasi-excellent scheme of dimension $\leq 3$ allows resolution of singularities by alterations.

 \section{Recollections on $1$-motives}

 For the comfort of the reader, we review some definitions and results from \cite{phd_paper}. Let $S$ be a scheme. The category $\DA(S):=\DA^\et(S,\BQ)$ is the triangulated category of rational \'etale motives coming from the stable homotopical $2$-functor $\DA^\et(-,\BQ)$ considered in \cite[\S 3]{Ayoub_Etale}. 

 \begin{defi}
\label{def:subcats}
The category $\DA^{\coh}(S)$ of \emph{cohomological motives} is the localising subcategory of $\DA(S)$ generated by
\[
\{f_*\BQ_X|\mbox{ $f:X\rightarrow S$ proper morphism}\}.
\] 
The category $\DA^1(S)$ of \emph{cohomological $1$-motives} is the localising subcategory of $\DA(S)$ generated by
\[
\{f_*\BQ_X|\mbox{ $f:X\rightarrow S$ proper morphism of relative dimension $\leq 1$}\}.
\]
\end{defi}

\begin{defi}\label{def:omega1}
The full embedding $\iota^1:\DA^1(S)\hookrightarrow \DA^\coh(S)$ preserves small sums, thus by Neeman's version of Brown representability for compactly generated triangulated categories  (see e.g. \cite[Theorem 8.3.3]{Neeman_book}), admits a right adjoint $\omega^1:\DA^\coh(S)\ra \DA^1(S)$.
\end{defi}

One of the main results of \cite{phd_paper} (which was reproved later by Vaish in \cite{vaish_gluing} with a different method) is the following.

\begin{theo}\label{thm:finiteness_omega1}
Let $S$ be a noetherian finite-dimensional excellent scheme. Assume that $S$ allows resolution of singularities by alterations. Then the functor $\omega^1:\DA^\coh(S)\ra \DA^1(S)$ preserves compact objects.
\end{theo}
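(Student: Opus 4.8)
The plan is to reduce, by a chain of dévissages, to the case of a smooth projective morphism over a regular base, where $\omega^1$ can be controlled via relative Picard and Albanese schemes. Since $\omega^1$ is triangulated, the class of $N\in\DA^\coh(S)$ with $\omega^1(N)$ compact is a thick subcategory; as the objects $f_*\BQ_X$ for $f\colon X\to S$ proper are compact in $\DA(S)$ (here $f_*=f_!$ preserves constructible objects) and generate $\DA^\coh(S)$, the compact objects of $\DA^\coh(S)$ are exactly their thick closure, so it is enough to prove $\omega^1(f_*\BQ_X)$ compact for $f$ proper. Chow's lemma reduces to $f$ projective, and de Jong--Temkin alterations (this is where the hypothesis on $S$ enters), together with a trace argument and an induction on $\dim X$, further allow one to assume that, over a dense open of $S$, $f$ is smooth projective.

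The main argument is a simultaneous induction on $\dim S$ on two statements: (I) $\omega^1_T(g_*\BQ_Y)$ is compact whenever $g\colon Y\to T$ is smooth projective and $T$ is regular; and (II) the full assertion of the theorem over $S$. Granting (I) and (II) in all smaller dimensions, one deduces (II) over $S$ as follows. Choose a dense open immersion $j\colon U\hookrightarrow S$ with $U$ regular and $f_U:=f\times_S U\to U$ smooth projective, and let $i\colon Z\hookrightarrow S$ be the reduced complement, so $\dim Z<\dim S$. One checks that $i^*$ and $j_!$ preserve both $\DA^\coh$ and $\DA^1$ (via proper base change, resp.\ via compactification of proper morphisms of relative dimension $\le 1$), and deduces by mate computations the compatibilities $\omega^1\circ i_*\simeq i_*\circ\omega^1$ and $\omega^1\circ j_!\simeq j_!\circ\omega^1$. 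Applying $\omega^1$ to the localisation triangle $j_!j^*f_*\BQ_X\to f_*\BQ_X\to i_*i^*f_*\BQ_X\xrightarrow{+1}$ and using two-out-of-three, compactness of $\omega^1(f_*\BQ_X)$ follows from: compactness of $i_*\omega^1_Z\big((f_Z)_*\BQ_{X_Z}\big)$, which holds by (II) over $Z$ since $i_*$ preserves compact objects; and compactness of $j_!\omega^1_U\big((f_U)_*\BQ_{X_U}\big)$, which holds by (I) over $U$ since $j_!$ preserves compact objects.

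It remains to prove (I) in dimension $\dim S$. Shrinking $T=S$ (the finitely many complements appearing have strictly smaller dimension and are dealt with by (II) in lower dimension), one may assume $g\colon Y\to T$ smooth projective with geometrically connected fibres and $\Pic^0_{Y/T}$, $\Alb_{Y/T}$ abelian schemes over $T$. The relative Murre projectors $\pi_0,\pi_1,\pi_{2d-1},\pi_{2d}\in\CHo^d(Y\times_T Y)_\BQ$ then exist and cut out a direct summand $P$ of the compact object $g_*\BQ_Y$; using the abelian schemes above one identifies $P$ with an object of $\DA^1(T)$, invoking that the $1$-motivic sheaf of an abelian scheme lies in $\DA^1(T)$ and is compact (it is a direct summand, via Poincaré reducibility and the multiplication-by-$n$ idempotents, of the cohomological motive of a relative curve). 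Finally one must check that the induced map $\iota^1 P\to g_*\BQ_Y$ realises $\iota^1\omega^1(g_*\BQ_Y)$, i.e.\ that its cone $Q$ satisfies $\omega^1(Q)=0$; since $Q$ is constructible, this vanishing is to be tested after the base changes $s^*$ for $s\in T$, where it becomes the field case of the theorem, due to Barbieri-Viale--Kahn \cite{bvk}.

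The main obstacle is exactly this last step: upgrading the fibrewise vanishing over the residue fields $k(s)$ to vanishing over the regular base $T$. One needs either that $\omega^1$ is compatible with the pullbacks $s^*$ on the objects at hand, or equivalently that the ``$1$-motivically negligible'' residual $Q$ is detected fibrewise; neither is formal, and it is here that the geometry of abelian schemes over $T$ and the constructibility over fields of \cite{bvk} are genuinely used. Everything else — the thick-closure reduction, Chow's lemma, the alteration/trace dévissage, the mate computations for $i_*$ and $j_!$, and the inductions on $\dim X$ and $\dim S$ — is routine once these inputs are in place.
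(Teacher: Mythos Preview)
A clarification first: this paper does not itself prove the theorem; it is recalled from \cite{phd_paper} (with an alternative proof in \cite{vaish_gluing} also noted), so there is no in-paper argument to compare against directly. What follows assesses your outline against the strategy of \cite{phd_paper} as reflected in the tools this paper imports from it.

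Your d\'evissage skeleton --- the thick-closure reduction to $f_*\BQ_X$ with $f$ proper, Chow's lemma, the alteration-plus-trace step, the noetherian induction on $\dim S$ via localisation, and the compatibilities $\omega^1 i_*\simeq i_*\omega^1$ and $\omega^1 j_!\simeq j_!\omega^1$ --- is correct and is indeed the shape of the argument. The genuine gap is exactly where you place it, and it is a gap rather than merely a difficulty: the claim that $\omega^1(Q)=0$ can be ``tested after the base changes $s^*$ for $s\in T$'' has no justification. There is no conservativity principle of the form ``$\omega^1(Q)=0$ over $T$ iff $\omega^1(s^*Q)=0$ for all $s\in T$'', and $\omega^1$ does not in general commute with the pullbacks $s^*$ --- establishing such commutation on constructible objects is essentially equivalent to the finiteness you are trying to prove. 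So the appeal to \cite{bvk} over residue fields does not close the argument. (A secondary issue: the existence of the relative Murre projectors $\pi_1,\pi_{2d-1}$ for an arbitrary smooth projective $Y/T$ over a base is not standard and itself already requires the relative Picard/Albanese input you are trying to package.)

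The route in \cite{phd_paper} sidesteps this: rather than producing a candidate summand $P$ and then trying to certify $P\simeq\omega^1(g_*\BQ_Y)$ by a pointwise test, one computes $\omega^1$ of the relevant motives \emph{directly over the regular base} in terms of relative Picard/Jacobian data, so that constructibility is read off from the representability and finiteness properties of $\Pic_{Y/T}$; compare the role of \cite[Corollary~3.20]{phd_paper} in this paper's proof of Lemma~\ref{lemma:omega_chi}. The identification is made by an explicit geometric computation over $T$, not by reduction to fields.
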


We recall the definition of Deligne $1$-motives.

\begin{defi}\label{def:1-mot}
 Let $S$ be a scheme. A $2$-term complex of commutative $S$-group schemes:
  \[ M=[\stackrel{0}{L}\longrightarrow \stackrel{-1}{G}]
  \]
is called a \emph{Deligne $1$-motive} over $S$ if $L$ is a lattice and $G$ is a semi-abelian scheme. We denote by $\CM_1(S)$ the category of Deligne $1$-motives with rational coefficients (i.e., the idempotent completion of the category whose morphisms groups are morphism groups of Deligne $1$-motives tensored with $\BQ$).
\end{defi}

Recall that sets of isomorphism classes of compact objects freely generate t-structures in compactly generated triangulated categories \cite[Lemme 2.1.69, Proposition~2.1.70]{Ayoub_thesis}.

\begin{defi}\label{main_def_1}
The \emph{motivic t-structure} $t^1_{\MM}(S)$ on $\DA^1(S)$ is the t-structure generated by the family
  \[
\DG_S=\left\{e_\sharp\Sigma^\infty(\BM)|\ e:U\ra S\text{ \'etale },\ \BM\in \CM_1(U)\right\}.
\]
of compact objects.
\end{defi}

Let $(\CT,\CT_{\geq 0},\CT_{<0})$ be a triangulated category with a t-structure, written with the homological convention. In \cite{phd_paper} we used the terminology \enquote{t-positive} for objects in $\CT_{\geq 0}$ and \enquote{t-negative} for objects in $\CT_{\leq 0}$; we adopt here the more correct english usage of \enquote{t-non-negative} for objects in $\CT_{\geq 0}$ and \enquote{t-non-positive} for objects in $\CT_{\leq 0}$.

The main properties of $t^1_{\MM}(S)$ from \cite[\S 4]{phd_paper} which we will use are the following.
\begin{itemize}
\item Elementary exactness properties \cite[Proposition 4.14]{phd_paper}.
\item Compact objects are bounded for $t^1_{\MM}(S)$ \cite[Corollary 4.28]{phd_paper}.
\item There is a functor $\Sigma^\infty(-)(-1):\CM_1(S)\ra \MM^1(S)$ which is fully faithful when $S$ is regular \cite[Theorem 4.21, Theorem 4.30]{phd_paper}. Furthermore, if $e:U\ra S$ is any \'{e}tale morphism and $\BM\in\CM_1(U)$, then $e_!\Sigma^\infty\BM(-1)\in \MM^1(S)$.
\end{itemize}

For the theory over an imperfect field, we have the following results which complements the treatment in \cite{phd_paper}.

\begin{prop}\label{prop:D1_imperfect}
Let $k$ be a field and $l/k$ be a purely inseparable field extension. Then the base change functor
  \[
\CM_1(k)\ra \CM_1(l)
\]
is an equivalence of categories.
\end{prop}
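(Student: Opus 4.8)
The plan is to reduce first to the case where $l/k$ is finite, and then to prove separately that the base change functor is fully faithful and essentially surjective, throughout using the decomposition of a Deligne $1$-motive $[L\to G]$ into its lattice part $L$, its semi-abelian part $G$, and the gluing map between them. For the reduction, write $l$ as the filtered union of its finite purely inseparable subextensions $l_i/k$. Lattices over $l$ and over any $l_i$ coincide (the absolute Galois groups all agree with $\Gal(k^{\sep}/k)$), while every semi-abelian scheme and every morphism defined over $l$ is already defined over some $l_i$ by the standard limit arguments of Grothendieck (EGA~IV, \S8); hence $\CM_1(l)$ is the filtered $2$-colimit of the $\CM_1(l_i)$, and it suffices to treat $l/k$ finite, say with $l^{p^n}\subseteq k$.

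For full faithfulness, note that a morphism of Deligne $1$-motives $[L_M\to G_M]\to[L_N\to G_N]$ with rational coefficients is a pair $(f,g)$ in $(\Hom(L_M,L_N)\oplus\Hom(G_M,G_N))\otimes\BQ$ annihilated by the linear map $(f,g)\mapsto u_Nf-g u_M$ valued in $\Hom(L_M,G_N)\otimes\BQ$; since $-\otimes\BQ$ is exact and compatible with base change, it suffices to show that base change induces isomorphisms on the three groups $\Hom(L_M,L_N)\otimes\BQ$, $\Hom(G_M,G_N)\otimes\BQ$ and $\Hom(L_M,G_N)\otimes\BQ$. The first is immediate from the coincidence of Galois groups. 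For the second I would use faithfully flat descent along $\Spec l\to\Spec k$: a homomorphism $G_{M,l}\to G_{N,l}$ descends to $k$ as soon as its two pullbacks to $\Spec(l\otimes_k l)$ agree, and since $l\otimes_k l$ is Artinian local with $(l\otimes_k l)_{\red}=l$, this agreement follows from formal unramifiedness of $\UHom(G_M,G_N)$ over $k$, which I would get by dévissage along $0\to T\to G\to A\to0$: $\UHom(T,T')$ is étale, $\UHom(A,A')$ is formally unramified by rigidity of abelian schemes, and $\UHom(T,A')=\UHom(A,T')=0$ (so in fact this group is invariant already integrally). The third group is the delicate one: after passing to $k^{\sep}$ (legitimate since $\Gal(l^{\sep}/l)=\Gal(k^{\sep}/k)$, and noting $l^{\sep}/k^{\sep}$ is again purely inseparable), $\Hom(L_M,G_N)\otimes\BQ$ becomes the space of Galois-equivariant maps from $L_M$ to $G_N(k^{\sep})\otimes\BQ$, so its invariance reduces to the following statement, which is the technical heart of the proof: for a semi-abelian variety $G$ over a field $F$ of characteristic $p$ and a purely inseparable extension $F'/F$, the map $G(F)\otimes\BQ\to G(F')\otimes\BQ$ is an isomorphism.

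To prove this, injectivity is clear, and for surjectivity I reduce to the case $(F')^{p^n}\subseteq F$ and use the factorisation $[p^n]_G=V^n\circ F^n$, where $F^n\colon G\to G^{(p^n)}$ is the $n$-fold relative Frobenius --- a finite flat homomorphism, hence an isogeny --- and $V^n$ is the Verschiebung, which is defined over $F$. Since $G$ is quasi-projective over $F$, the relative Frobenius raises projective coordinates to the $p^n$-th power, hence carries $G(F')$ into $G^{(p^n)}(F)$ (the new coordinates lie in $(F')^{p^n}\subseteq F$); applying the $F$-morphism $V^n$ keeps us inside $G(F)$. So $[p^n]P\in G(F)$ for every $P\in G(F')$, and $P\otimes 1=([p^n]P)\otimes p^{-n}$ lies in the image of $G(F)\otimes\BQ$. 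Taking Galois invariants gives the isomorphism on $\Hom(L_M,G_N)\otimes\BQ$, completing the proof of full faithfulness.

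For essential surjectivity, take $[L'\to G']$ over $l$. The lattice $L'$ descends to $k$ on the étale site. For $G'$ the same Frobenius-twisting idea applies: $x\mapsto x^{p^n}$ is a field embedding $\psi\colon l\hookrightarrow k$ with image $l^{p^n}$, so $(G')^{(p^n)}\cong(G'\times_{l,\psi}k)_l$ with $G'\times_{l,\psi}k$ a semi-abelian scheme over $k$, and the relative Frobenius $G'\to(G')^{(p^n)}$ is an isogeny, hence an \emph{isomorphism} in the isogeny category $\CM_1(l)$. Transporting the gluing map $L'\to G'$ through these isomorphisms and using the bijectivity of base change on $\Hom(L,G)\otimes\BQ$ established above (after rescaling the lattice to clear denominators), one finds that $[L'\to G']$ is isomorphic in $\CM_1(l)$ to the base change of a Deligne $1$-motive over $k$; idempotents descend since the functor is by then known to be fully faithful. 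The main obstacle is conceptual rather than computational: the ``rigid'' data --- homomorphism groups of lattices and of abelian varieties --- descend on the nose by fppf descent, but abelian varieties occurring as \emph{objects} descend only up to isogeny, which is exactly why the proposition fails with integral coefficients and forces one to argue inside $\CM_1$ via the Frobenius--Verschiebung decomposition of multiplication by $p^n$; the dévissage for $\UHom(G_M,G_N)$ and the passage to Galois invariants are then routine bookkeeping.
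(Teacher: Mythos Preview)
Your argument is correct and complete; the Frobenius--Verschiebung factorisation of $[p^n]$ is exactly the right tool, and your d\'evissage for the formal unramifiedness of $\UHom(G_M,G_N)$ goes through (one has to chase the extension $0\to T\to G\to A\to 0$ a little more carefully than a direct-sum decomposition, but the conclusion holds). The paper's proof differs mainly in its handling of the semi-abelian part: instead of your fppf-descent-plus-rigidity argument for $\Hom(G,G')$ and your explicit Frobenius twist $(G')^{(p^n)}\simeq (G'\times_{l,\psi}k)_l$ for essential surjectivity, the paper invokes Brion's results on the isogeny category of commutative algebraic groups as a black box for both steps. For the gluing map $L\to G$ the two approaches converge: the paper's citation of SGA3~VII$_A$~\S4.3 to show that $[q]\lambda u$ descends is essentially your observation that $[p^n]\cdot G(l)\subset G(k)$. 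Your route is more self-contained and actually yields the slightly sharper statement that $\Hom_k(G,G')\to\Hom_l(G_l,G'_l)$ is bijective integrally, not just after $\otimes\,\BQ$; the paper's route is shorter on the page by outsourcing to Brion. One small point: when you pass to $k^{\sep}$ and take Galois invariants, it is cleaner to trivialise $L_M$ over a \emph{finite} Galois extension $k'/k$ and use that $(-)^{\Gal(k'/k)}$ commutes with $\otimes\,\BQ$ for finite groups (via the averaging idempotent), rather than working with the full profinite group.
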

\begin{proof}
  Let us first prove that this functor is fully faithful. Even though $\CM_1(k)$ is defined as an idempotent completion, it is clearly enough to prove full faithfulness before passing to the idempotent completion. Let $\BM=[L\stackrel{u}{\ra} G]$ and $\BM'=[L'\stackrel{u'}{\ra} G']$ in $\CM_1(k)$. 

Since $\BQ$ is flat over $\BZ$, it is enough to show faithfulness for the functor $\CM_1(k,\BZ)\ra \CM_1(l,\BZ)$. The base change functor from group schemes over $k$ to group schemes over $l$ is faithful. This concludes the proof of faithfulness.

Since the category of lattices only depends on the small \'{e}tale site $k_{\et}$ and $k_{\et}\simeq l_{\et}$, we have $\CM_1(k)(L,L')\ra \CM_1(k^\perf)(L_{l},L'_{l})$. By \cite[Theorem 3.11]{Brion_iso}, $\Hom_{\underline{\CC}_k}(G,G')\simeq \Hom_{\underline{\CC}_l}(G_l,G'_l)$ with $\underline{\CC}_k$ the category of smooth commutative $k$-group up to isogeny in the sense of loc.cit. Since semi-abelian varieties over any field are divisible, this implies by \cite[Proposition 3.6]{Brion_iso} that
\[
\CM_1(k)(G,G'):= \Hom(G,G')\otimes\BQ\simeq \Hom_{\underline{\CC}_k}(G,G')\simeq \Hom_{\underline{\CC}_l}(G_l,G'_l)\simeq \CM_1(l)(G,G').
\]
We can now prove fullness. Let $g=f\otimes \frac{1}{n}\in\CM_1(l)(\BM_l,\BM'_{l})$ with $f=(f^L,f^G)\in \CM_1(k,\BZ)(\BM_{l},\BM'_{l'})$. By the previous paragraph, there exist preimages $f_0^L:L\ra L'$ and $f_0^G:G\ra G'$ of $f^L$, $f^G$. The pair $(f_0^L,f_0^G)$ is a morphism of complexes if and only if $u'\circ f_0^L =f_0^G\circ u:L\ra G'$. Because the base change for group schemes from $k$ to $l$ is faithful, we can check this over $l$, where it follows from the fact that $f$ is a morphism. This concludes the proof of fullness.

We prove essential surjectivity. Let $\BM\in \CM_{1}(l)$. By full-faithfulness, we can ignore the idempotent completion and assume that $\BM=[L\stackrel{u}{\ra}G]$. By etale descent and semi-simplicity of lattices up to isogeny, we can assume furthermore that $L\simeq \BZ^r$ is split. By a finite generation argument (note that $L$ is not finitely presented as a scheme, but it is as a group scheme), we see that $\BM$ comes from a finitely generated (hence finite since it is purely inseparable) subextension of $l$. Hence we assume that $l$ is finite over $k$, say $l^{q}\subset k$ with $q=p^{N}$ is a large enough power of $p$. There is a lattice $L_0$ over $k$ such that $L\simeq (L_0)_l$ as group schemes. By \cite[Theorem 3.11]{Brion_iso} (again combined with \cite[Proposition 3.6]{Brion_iso} and divisibility of semi-abelian varieties), there exists a semi-abelian variety $G_{0}$ over $k$ and an isogeny $\lambda:G\ra (G_{0})_l$. We thus get a morphism $\lambda u:\BZ^r\ra (G_0)_l$. By \cite[Exp. VIIA \S 4.3]{SGA3_1}, we see that $[q]\lambda u$ factors through a morphism $L_0\ra G_0$, which makes \([L_0\ra G_0]\in \CM_1(k)\) into a pre-image of \(\BM\). This concludes the proof.
\end{proof}

\begin{prop}\label{prop:field}
Over a field $k$, the t-structure restricts to compact objects and the functor $\Sigma^\infty(-)(-1):D^b(\CM_1(k))\ra \DA^1_c(k)$ is an equivalence of t-categories, so that $\Sigma^\infty(-)(-1):\CM_1(k)\simeq \MM^1_c(k)$.
\end{prop}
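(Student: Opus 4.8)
The plan is to reduce to the case of a perfect base field, where the statement is essentially a theorem of Barbieri-Viale--Kahn \cite{bvk}, and then to upgrade the resulting equivalence of triangulated categories to an equivalence of t-categories. For the reduction, let $k^\perf$ be the perfect closure of $k$ and $\pi\colon\Spec k^\perf\to\Spec k$ the corresponding morphism, which is a universal homeomorphism. With rational coefficients $\pi^*\colon\DA(k)\to\DA(k^\perf)$ is an equivalence of triangulated categories (invariance of rational \'etale motives under universal homeomorphisms, see \cite{Ayoub_Etale}), and using proper base change for the generators $f_*\BQ_X$ of $\DA^1(k)$, smooth base change for the generators $e_\sharp\Sigma^\infty(\BM)$ of $\DG_k$, and continuity of $\DA^\et(-,\BQ)$, one checks that $\pi^*$ restricts to an equivalence $\DA^1(k)\simeq\DA^1(k^\perf)$ which preserves compact objects and carries $t^1_\MM(k)$ to $t^1_\MM(k^\perf)$; this equivalence is moreover compatible with $\Sigma^\infty(-)(-1)$ and with the equivalence $\CM_1(k)\simeq\CM_1(k^\perf)$ of Proposition \ref{prop:D1_imperfect} (one uses here that the base change to $k^\perf$ of an \'etale $k$-scheme $U$ is componentwise the perfect closure of $U$, so that $\CM_1(U)\simeq\CM_1(U\times_k k^\perf)$). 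Hence we may assume $k$ is perfect.

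Assume $k$ is perfect. Then $\CM_1(k)$ is abelian (\cite{orgogozo}), so $D^b(\CM_1(k))$ carries its standard bounded t-structure, with heart $\CM_1(k)$. By \cite[Theorem 4.21, Theorem 4.30]{phd_paper} the functor $\Sigma^\infty(-)(-1)\colon\CM_1(k)\to\MM^1(k)$ is exact and fully faithful; write $R\colon D^b(\CM_1(k))\to\DA^1(k)$ for the functor $\Sigma^\infty(-)(-1)$ of the statement, which extends it. That $R$ is an equivalence onto $\DA^1_c(k)$ is, up to routine translations of conventions, the main result of \cite{bvk} (see also \cite{phd_paper}): one passes from the homological $1$-motivic geometric motives considered there to the cohomological ones used here via Poincar\'e duality over $k$ together with Cartier duality of $1$-motives, and one invokes the identification of rational \'etale and rational Nisnevich motivic categories over $k$. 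Concretely, full faithfulness of $R$ is equivalent to the comparison maps
\[
\Ext^n_{\CM_1(k)}(\BM,\BM')\longrightarrow\Hom_{\DA^1(k)}\bigl(\Sigma^\infty(\BM)(-1),\Sigma^\infty(\BM')(-1)[n]\bigr)
\]
being isomorphisms for all $\BM,\BM'\in\CM_1(k)$ and all $n\in\BZ$: this is clear for $n<0$ (both sides vanish, the source because $\CM_1(k)$ is the heart of a t-structure and the target by the axioms of $t^1_\MM(k)$, since both objects lie in $\MM^1(k)$) and for $n=0$ (full faithfulness on hearts), and for $n\geq 1$ it is the computation of motivic Ext-groups of $1$-motives carried out in \cite{bvk}; essential surjectivity reflects the fact that $\DA^1_c(k)$ is generated, as a thick subcategory, by the objects $\Sigma^\infty(\BM)(-1)$.

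It remains to pass from the equivalence of triangulated categories to an equivalence of t-categories. The functor $R$ is t-exact: it sends $\CM_1(k)$ into the heart $\MM^1(k)$, and since $\DA^1(k)_{\geq 0}$ and $\DA^1(k)_{\leq 0}$ are closed under extensions while $D^b(\CM_1(k))_{\geq 0}$ (resp.\ $D^b(\CM_1(k))_{\leq 0}$) is generated under extensions by the non-negative (resp.\ non-positive) shifts of $\CM_1(k)$, it follows that $R(D^b(\CM_1(k))_{\geq 0})\subseteq\DA^1(k)_{\geq 0}$ and $R(D^b(\CM_1(k))_{\leq 0})\subseteq\DA^1(k)_{\leq 0}$. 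Hence, for any $M=R(N)\in\DA^1_c(k)$, applying $R$ to the standard truncation triangle of $N$ yields a distinguished triangle whose outer vertices lie in $\DA^1(k)_{\geq 0}$ and $\DA^1(k)_{<0}$; by uniqueness of t-truncations this is the $t^1_\MM(k)$-truncation triangle of $M$, so the truncations of $M$ again lie in $\DA^1_c(k)$. Therefore $t^1_\MM(k)$ restricts to compact objects, $R$ is an equivalence of t-categories, and on hearts it yields $\Sigma^\infty(-)(-1)\colon\CM_1(k)\simeq\MM^1_c(k)$, as asserted.

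The step I expect to be the main obstacle is the appeal to \cite{bvk} in the second paragraph: making the translation between its homological/Nisnevich framework and the cohomological/\'etale one of the present paper fully precise (in particular keeping track of the duality functors, the Tate twists and the signs) is where the genuine work lies. By contrast, both the reduction to the perfect case and the passage from a triangulated equivalence to a t-exact equivalence are formal.
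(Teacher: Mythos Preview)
Your proposal is correct and follows essentially the same approach as the paper. The paper's own proof is a one-line reference: it invokes \cite[Proposition 4.20]{phd_paper} (which packages the perfect-field case, ultimately resting on \cite{bvk} and \cite{orgogozo}) together with Proposition~\ref{prop:D1_imperfect} for the reduction from $k$ to $k^\perf$; you have simply unpacked the same two ingredients, citing \cite{bvk} directly and spelling out the t-exactness passage and the compatibility of $\pi^*$ with $\DA^1$, compactness, and $\DG$ by hand.
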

\begin{proof}
This follows from \cite[Proposition 4.20]{phd_paper} combined with Proposition \ref{prop:D1_imperfect}.
\end{proof}

In the same vein, here is a result implicit in \cite{phd_paper} which we make explicit for later reference.

\begin{lemma}\label{lemma:radicial}
  Let $f:T\ra S$ be a finite surjective radicial morphism. Then $f^*:\DA^1(S)\ra \DA^1(T)$ is an equivalence of t-categories.
\end{lemma}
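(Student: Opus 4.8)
The plan is to reduce the statement about $\DA^1$ to the known fact that $f^*:\DA^\coh(S)\ra\DA^\coh(T)$ (equivalently, $f^*:\DA(S)\ra\DA(T)$) is an equivalence of t-categories for a finite surjective radicial morphism, and then to check compatibility with $\omega^1$. Recall first that since $\BQ$-coefficients are used, a finite surjective radicial morphism $f:T\ra S$ induces an equivalence $f^*:\DA(S)\xrightarrow{\sim}\DA(T)$ (topological invariance of the small étale site, plus the fact that with rational coefficients the \'etale-local and Nisnevich-local theories agree on this point); this is already used implicitly in \cite{phd_paper}. Moreover this equivalence is t-exact in both directions for the standard (perverse homotopy) t-structures, and it restricts to an equivalence on cohomological motives $\DA^\coh(S)\simeq\DA^\coh(T)$ because the generators $g_*\BQ_X$ for $g:X\ra S$ proper are sent, by proper base change, to generators of $\DA^\coh(T)$, and conversely every proper $T$-scheme is, up to the equivalence, of this form (again by topological invariance and finiteness of $f$).

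\smallskip

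Next I would show $f^*$ restricts to an equivalence $\DA^1(S)\simeq\DA^1(T)$. The inclusion $f^*(\DA^1(S))\subseteq\DA^1(T)$ is immediate: proper morphisms $X\ra S$ of relative dimension $\leq 1$ pull back to proper $T$-morphisms of relative dimension $\leq 1$, and $f^*$ commutes with the relevant $g_*$ by proper base change. For the reverse inclusion, since $f$ is finite (hence proper and of relative dimension $0$), $f_*=f_!$ sends the generators of $\DA^1(T)$ into $\DA^1(S)$: if $h:Y\ra T$ is proper of relative dimension $\leq 1$ then $f\circ h:Y\ra S$ is proper of relative dimension $\leq 1$ and $f_*h_*\BQ_Y=(fh)_*\BQ_Y$. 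Combined with the fact that $f_*f^*\simeq\id$ (as $f$ is radicial, the counit/unit of $f^*\dashv f_*$ is an isomorphism — this is essentially the statement that $f^*$ is an equivalence on $\DA$), this gives that $f^*$ restricts to an equivalence $\DA^1(S)\simeq\DA^1(T)$ with inverse $f_*$. Equivalently: $\iota^1_T\circ f^*\simeq f^*\circ\iota^1_S$, and passing to right adjoints (using that $f^*$ is an equivalence, so is its own "right adjoint inverse") yields $\omega^1_T\circ f^*\simeq f^*\circ\omega^1_S$ on $\DA^\coh$.

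\smallskip

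Finally I would upgrade this to an equivalence of t-categories. The t-structure $t^1_\MM(S)$ is generated by the compact objects $\DG_S=\{e_\sharp\Sigma^\infty\BM\mid e:U\ra S\text{ \'etale},\ \BM\in\CM_1(U)\}$. Given $e:U\ra S$ \'etale, form the base change $U_T:=U\times_S T\ra T$, which is again finite surjective radicial over $U$, so by Proposition~\ref{prop:D1_imperfect} applied fibrewise (or rather: the base-change functor on Deligne $1$-motives along $U\ra U_T$... — more precisely, since $U_T\ra U$ is finite radicial, $\CM_1(U)\simeq\CM_1(U_T)$ by the same argument as Proposition~\ref{prop:D1_imperfect}, which is local on the base and only invokes topological invariance of the \'etale site together with Brion's results, both insensitive to nilpotents) we have $\CM_1(U)\simeq\CM_1(U_T)$; and $f^*$ commutes with $e_\sharp$ and with $\Sigma^\infty$. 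Hence $f^*$ sends $\DG_S$ bijectively (up to isomorphism) onto $\DG_T$. Since an equivalence of triangulated categories carrying a generating family of compacts for one t-structure onto a generating family of compacts for another is automatically t-exact in both directions, we conclude that $f^*:\DA^1(S)\ra\DA^1(T)$ is an equivalence of t-categories.

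\smallskip

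The main obstacle is the first, "soft" step: cleanly citing or establishing that $f^*:\DA(S)\ra\DA(T)$ is an equivalence (and t-exact) for finite surjective radicial $f$ with rational coefficients. This is folklore — it follows from topological invariance of the \'etale site together with the continuity/localization formalism of \cite{Ayoub_Etale} — but one must be careful that $\DA^\et(-,\BQ)$ is used (so that the argument does not require perfectness or characteristic-zero hypotheses), and that the equivalence is compatible with the six operations, in particular with $f_*$ for proper $f$, which is what makes the restriction to $\DA^\coh$ and $\DA^1$ work. Once that input is in hand, everything else is a formal adjunction-and-generators argument, with the only genuinely content-bearing ingredient being the radicial invariance of $\CM_1(-)$, which is Proposition~\ref{prop:D1_imperfect} localized on the base.
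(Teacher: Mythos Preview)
Your argument that $f^*$ restricts to an equivalence $\DA^1(S)\simeq\DA^1(T)$ is correct and matches the paper: the paper cites \cite[Corollaire~2.1.164]{Ayoub_thesis} for the fact that $f^*\simeq f^!$ is an equivalence on $\DA$, and then observes (as you do) that $f_*$ preserves $\DA^1$ since $f$ is finite.

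The t-exactness step is where you diverge, and where there is a gap. Your generator-matching argument needs $f^*$ to carry $\DG_S$ \emph{onto} $\DG_T$ up to isomorphism, and for this you assert $\CM_1(U)\simeq\CM_1(U_T)$ for every \'etale $U\to S$, appealing to ``the same argument as Proposition~\ref{prop:D1_imperfect}''. But that proposition treats only purely inseparable field extensions; its proof uses Brion's structure results for algebraic groups over a field, which have no evident analogue for semi-abelian schemes over a general base and are not ``insensitive to nilpotents'' in any obvious sense (consider the deformation theory of abelian schemes over $k[\epsilon]$). Without this, you only obtain the inclusion $f^*(\DG_S)\subseteq\DG_T$, which yields right t-exactness of $f^*$ but not left t-exactness.

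The paper's route is shorter and sidesteps this entirely: from the identification $f^*\simeq f^!$ together with the elementary exactness properties of \cite[Proposition~4.14]{phd_paper} (namely that $f^*$ is t-non-negative and $\omega^1 f^!$ is t-non-positive), t-exactness follows immediately. The isomorphism $f^*\simeq f^!$ is precisely the ingredient that makes any generator-level analysis of $\CM_1$ unnecessary.
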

\begin{proof}
For such a morphism, $f^*\simeq f^!:\DA_{(c)}(S)\ra\DA_{(c)}(T)$ is an equivalence by \cite[Corollaire 2.1.164]{Ayoub_thesis}. Since $f$ is finite, the functor $f_*$ sends $\DA^1(S)$ to $\DA^1(T)$, so that $f^*$ induces an equivalence between $\DA^1(S)$ and $\DA^1(T)$. Finally, the t-exactness follows from $f^*\simeq f^!$ and \cite[Proposition 4.14]{phd_paper}.
\end{proof}

\section{Constructible $1$-motives}
\label{sec:org484b0d5}

Here is the main theorem of this paper.

\begin{theo}\label{main_theorem}
Let $S$ be an excellent scheme allowing resolution of singularities by alterations. Then 
\begin{enumerate}[label={\upshape(\roman*)}]
\item\label{compactness} The t-structure $t^1_{\MM}(S)$ restricts to the subcategory $\DA^1_c(S)$ of constructible $1$-motives. Denote its heart by $\MM^1_c(S)$.
\item\label{structure} Let $M$ be in $\DA^1_c(S)$. Then $M$ is in $\MM^1(S)$ if and only if there exists a locally closed stratification $(S_{\alpha})$ of $S_{\red}$ such that for all $\alpha$, if $i_{\alpha}:S_\alpha\ra S$ is the immersion, we have 
\[
  i^*_{\alpha} M\simeq \Sigma^\infty \BM_\alpha(-1)
\]
with $\BM_\alpha$ a Deligne $1$-motive on $S_{\alpha}$. Moreover, we can assume the $S_{\alpha}$ to be regular.
\item \label{pullback} Let $f:T\ra S$ be a morphism. Then the functors $f^*:\DA^1_c(S)\ra \DA^1_c(T)$ and $f^*:\DA_{1,c}(S)\ra \DA^1_c(S)$ are t-exact (with respect to the restricted t-structures from point $(i)$).
\item \label{Realisation_Etale} Let $\ell$ be a prime number invertible on $S$. Then the functor $R_\ell: \DA^1_c(S)\ra D^b_c(S_\et,\BQ_\ell)$ obtained by restricting the rational $\ell$-adic realisation functor from \cite[Definition 9.6]{Ayoub_Etale} is t-exact for the motivic t-structure of \ref{compactness} on the source and the standard t-structure on the target.
\item \label{Realisation_Betti} Assume that $S$ is a $k$-scheme with $k$ a field of characteristic 0 admitting an embedding $\sigma:k\ra \BC$. Then the Betti realisation functor $R_{B,\sigma}: \DA^1_c(S)\ra D^b_c(S_\sigma(\BC),\BQ)$ is t-exact for the motivic t-structure of \ref{compactness} on the source and the standard t-structure on the target.
\item \label{other_cats} Statements \ref{structure}-\ref{Realisation_Betti} also hold for homological $1$-motives $\DA_1(-)$ (resp. for $0$-motives $\DA^0(-)$) provided one replaces $\BM(-1)$ by $\BM$ (resp. by $\CF$ with $\CF$ a locally free sheaf of $\BQ$-vector spaces) in \ref{structure} (cf. \cite{phd_paper} for the relevant definitions for homological $1$-motives and $0$-motives).
\end{enumerate}
\end{theo}

\begin{proof}
  First, a word of caution about notation. Since we do not know yet that $t^1_{\MM}(S)$ restricts to compact objects, we refrain from using the notation $\MM^1_c(S)$ and always write $\MM^1(S)\cap \DA^1(S)_c$ for the compact objects in the heart.
  
\noindent  
\underline{First reductions:}
We first prove the \enquote{if} direction of Statement \ref{structure}. This is an immediate consequence of the following lemma.

\begin{lemma}\label{lemma:str_heart}
Let $S$ be a scheme as in the statement of Theorem \ref{main_theorem} and $M\in \DA^1_c(S)$. Assume that there exists a stratification $\{S_\alpha\}$ of $S$ so that we have 
\[
i^*_{\alpha} M\simeq\Sigma^\infty \BM_{\alpha}(-1)
\]
with $\BM_{\alpha}$ a Deligne $1$-motive on $S_{\alpha}$. Then $M$ is in $\MM^1(S)$. 
\end{lemma}
\begin{proof}
We prove this statement by induction on the dimension of $S$. The result clearly holds in dimension $0$. Let $S$ be a scheme of dimension $\leq d$, and $M\in \DA^1_c(S)$. Let $\{S_{\alpha}\}$ be a stratification of $S$ as in the statement. Write $U$ for the union of the open strata, and $Z$ for the complement, equipped with the reduced scheme structure (i.e. the union of all the other strata). Write $j:U\ra S$ for the open immersion and $i:Z\ra S$ for the complementary closed immersion. Then $Z$ is of dimension $<d$. We see that $i^*M$ satisfies the same hypothesis, with the restricted stratification (since the pullback of a Deligne $1$-motive is a Deligne $1$-motive, \cite[Corollary 2.15]{phd_paper}). By the induction hypothesis, the motive $i^*M$ is in $\MM^1(Z)$; by \cite[Proposition 4.14]{phd_paper}, we get $i_*i^*M\in\MM^1(S)$. Moreover, $j^*M$ is a Deligne $1$-motive. By \cite[Theorem 4.21]{phd_paper}, this implies that $j_!j^*M$ is in $\MM^1(S)$. By localisation, we have a distinguished triangle
\[
j_!j^*M\ra M\ra i_*i^*M\rap
\]
which shows that $M\in \MM^1(S)$ as required.
\end{proof}

We introduce the following claim which depends on an integer $d\in \BN$.

\begin{claim}
$(\mathrm{Str}_d)$: for any $S$ as in the theorem, of dimension $\leq d$, the t-structure $t^1_{\MM}(S)$ restricts to $\DA^1_c(S)$ and any $M\in \MM^1(S)\cap \DA^1_c(S)$, there exists a locally closed stratification $(S_{\alpha})$ of $S_{\red}$ such that for all $\alpha$, the scheme $S_{\alpha}$ is regular and, if $i_{\alpha}:S_\alpha\ra S$ is the natural immersion, we have 
\[
  i^*_{\alpha} M\simeq \Sigma^\infty\BM_\alpha(-1)
\]
with $\BM_\alpha$ a Deligne $1$-motive on $S_{\alpha}$.
\end{claim}

Clearly, the statements $(\mathrm{Str}_{d})$ for all $d$ are equivalent to the combination of \ref{compactness} and \ref{structure}. We will prove $(\mathrm{Str}_{d})$ by induction on $d$. Before we proceed to do so, let us show that \ref{compactness} and \ref{structure} imply all the other claims of Theorem \ref{main_theorem}.
 
We show \ref{Realisation_Etale} and \ref{Realisation_Betti} assuming \ref{compactness} and \ref{structure}. The argument is the same in both cases, so we only present the $\ell$-adic case. By \cite[Corollary 4.28]{phd_paper}, the motivic t-structure on $\DA^1_c(S)$ is bounded, so that to prove t-exactness it is enough to show that an object in the heart $\MM^1(S)\cap \DA^1_c(S)$ is sent to a constructible $\ell$-adic sheaf. Let $M\in \MM^1(S)\cap \DA^1_c(S)$. Assuming \ref{structure}, there exists a locally closed stratification $(S_{\alpha})$ of $S_{\red}$ such that for all $\alpha$, if $i_{\alpha}:S_\alpha\ra S$ is the natural immersion, we have 
\[
  i^*_{\alpha} M\simeq \Sigma^\infty \BM_\alpha(-1)
\]
with $\BM_\alpha$ a Deligne $1$-motive on $S_{\alpha}$. By gluing and exactness of pullbacks for $\ell$-adic sheaves, it is enough to show that for any $\alpha$, the object $i^*_\alpha R_\ell M\simeq R_\ell (\Sigma^\infty \BM_\alpha)(-1)$ is a constructible $\ell$-adic sheaf. This fact is established in the proof of \cite[Proposition 4.15]{phd_paper}.

We show \ref{pullback} assuming \ref{compactness} and \ref{structure}. This follows from the slightly more precise lemma, which will also be used in the induction below.

\begin{lemma}\label{lemma:str_pull}
  Assume that statement $(\mathrm{Str}_d)$ holds. Let  $f:T\ra S$ be a morphism of schemes with $\dim(S)\leq d$ and $\dim(T)\leq d$. By assumption, $t^{1}_{\MM}(S)$ and $t^{1}_{\MM}(T)$ restrict to the subcategories of compact objects. Then the functor $t^*:\DA^{1}_{c}(T)\ra \DA^{1}_{c}(S)$ is t-exact. 
\end{lemma}

\begin{proof}
By Lemma \ref{lemma:radicial}, we can assume that $S$ and $T$ are reduced. By \cite[Proposition 4.14]{phd_paper}, it is enough to show that $f^*:\DA^1_c(S)\ra \DA^1_c(T)$ is t-non-positive. Let $M\in \DA^1_c(S)_{\leq 0}$. We have to show that $f^*M$ is t-non-positive. By \cite[Corollary 4.28]{phd_paper}, the motive $M$ has finitely many non-zero homology objects, and thus can be obtained by finitely many extensions starting with non-positive shifts of objects in $\MM^1(S)\cap \DA^1_c(S)$. So it is enough to prove that $M\in \MM^1(S)\cap \DA^1_c(S)\Rightarrow f^*M$ is t-non-positive.

By $(\mathrm{Str}_d)$, there exists a stratification $\{S_\alpha\}$ of $S$ so that we have 
\[
i^*_{\alpha} M\simeq\Sigma^\infty \BM_{\alpha}(-1)
\]
with $\BM_{\alpha}$ a Deligne $1$-motive on $S_{\alpha}$. Consider the induced stratification $T_\alpha:=f^{-1}(S_\alpha)$ of $T$, with $i'_{\alpha}:T_\alpha\ra T$ and $f_\alpha:T_{\alpha}\ra S_{\alpha}$. The $T_{\alpha}$ are not necessarily regular, but this does not matter for the rest of the argument. By \cite[Corollary 2.14]{phd_paper}, we have $i'_{\alpha}f^*M\simeq f_{\alpha}^*\Sigma^\infty \BM_{\alpha}(-1)$ is a Deligne $1$-motive. By Lemma \ref{lemma:str_heart}, this implies that $f^*M$ is in $\MM^1(T)\cap \DA^1_c(S)$. This concludes the proof.
\end{proof}

\noindent
\underline{Reduction to the degeneration of Deligne $1$-motives:}

All that remains is to establish $(\mathrm{Str}_d)$ by induction on $d\geq 0$. 

Let us verify the case $d=0$. We can clearly assume $S$ to be reduced, and then by considering connected components, the result follows from Proposition~\ref{prop:field}.

We now assume $d\geq 1$ and that $(\mathrm{Str_{d-1}})$ holds. Let $S$ be a scheme of dimension $\leq d$. By Lemma \ref{lemma:radicial} we can assume that $S$ is reduced.

To prove that $t^{1}_{c}(S)$ restricts to compact objects, we have to prove that for any $M\in\DA^{1}_{c}(S)$, we have $\tau_{\geq 0}M\in\DA^{1}_{c}(S)$. By \cite[Corollary 4.28]{phd_paper}, $M$ is bounded for $t^1_{\MM}(S)$. By induction on the $t^{1}_{\MM}(S)$-amplitude of $M$, we see that it is enough to show that for a morphism $f:A\ra B$ with $A,B\in \MM^1(S)\cap \DA_c(S)$, the motive $\tau_{\geq 1} \Cone(f)\simeq \Ker_{\MM^1(S)}(f)$ is compact (or equivalently, that $\tau_{\leq 0}\Cone(f)\simeq \Coker_{\MM^1(S)}(f)$ is compact).

The idea is to describe $f$ over a dense open of $S$ in terms of Deligne $1$-motives and to try to degenerate it to the boundary and apply the induction hypothesis. Let us first describe $A$ and $B$ generically.

Let $\eta$ be the scheme of generic points of $S$, i.e., a disjoint union of spectra of fields. Let us show that $\eta^*A$ lies in $\MM^1(\eta)\cap \DA^{1}_{c}(S)$. The functor $\eta^*$ is t-non-negative by Proposition~\cite[Proposition 4.14]{phd_paper} (where no finite type hypothesis is required). Let us show that $\eta^*A$ is t-non-positive. We have to show that for any $P$ in a compact generating family of $t^1_{\MM}(\eta)$ and $n>0$, we have $\DA(\eta)(P[n],\eta^*A)\simeq 0$.  By \cite[Proposition 4.20]{phd_paper}, we can assume for instance that $P$ is of the form $\Sigma^\infty\BM'_{\eta}(-1)$ with $\BM'_{\eta}\in \CM_1(\eta)$.

By continuity results for Deligne $1$-motives \cite[Proposition A.10]{phd_paper} and $\DA_c(-)$, we can find an open set $\eta\in V\stackrel{j}{\ra} S$ such that there exist $Q= \CR \Sigma^\infty\widetilde{\BM}''(-1)$ for $\widetilde{\BM}''\in \CM_1(V)$ and $P\simeq (\eta/V)^*Q$. In particular, in both cases, $Q$ is t-non-negative. We can then use continuity for $\DA_c(-)$ to write
\begin{eqnarray*}  
  \DA(\eta)(P[n],\eta^*A) &\simeq & \DA(\eta)((\eta/V)^* Q[n],(\eta/V)^*j^*A)\\
                                    &\simeq & \Colim_{\eta\in W\subset V} \DA(W)((W/V)^*Q[n], (W/V)^* j^*A).
\end{eqnarray*}
For such an intermediate open $W$, we see from \cite[Proposition 4.14]{phd_paper} that $(W/V)^*Q$ is t-non-negative while $(W/V)^*j^*A$ is t-non-positive. This implies that every morphism group in the colimit vanishes, and completes the proof that $\eta^*A$ is in $\MM^1(\eta)\cap \DA^1_c(\eta)$.

Over $\eta$, which is a finite disjoint union of spectra of fields, thanks to Proposition \ref{prop:field}, we understand completely the structure of compact objects in $\MM^1(\eta)$. Namely, there exists a Deligne $1$-motive $\BM_\eta\in \CM_1(\eta)$ such that 
\[
\eta^*A\simeq \Sigma^\infty \BM_\eta(-1)
\]
The motive $\BM_\eta$ has three components $\Gr^W_i \BM_\eta$ for $i=-2,-1,0$. We can write $\Gr^W_{-1}\BM_\eta$ as a direct factor of the Jacobian of a smooth projective geometrically connected curve $C_{\eta}$ of genus $\geq 2$ with a rational point $\sigma_{\eta}$; that is, $\Gr^W_{-1}\BM_{\eta}=\image(\pi_{\eta}:\Jac(C_\eta)\otimes\BQ \ra \Jac(C_\eta)\otimes\BQ)$ with $\pi_\eta$ an idempotent. 

By continuity for Deligne $1$-motives and $\DA(-)$, we can find an open set $\eta\in U\stackrel{j}{\ra} S$ such that $j^*A\simeq \Sigma^\infty\widetilde{\BM}(-1)$ for $\widetilde{\BM}\in \CM_1(U)$. We can also assume that $U$ is regular, and that $\Gr^W_{-1}\BM$ is a direct factor of the Jacobian of a smooth projective curve $f:C\ra U$ with geometrically connected fibers which comes together with a section $\sigma:U\ra C$.

The same arguments apply verbatim for $B$, so that we can assume as well that $j^*B\simeq \Sigma^\infty\widetilde{\BN}(-1)$ for $\widetilde{\BN}\in \CM_1(U)$ with $\Gr^W_{-1}\widetilde{\BN}$ a direct factor of the Jacobian of a smooth projective curve $f:P\ra U$ with geometrically connected fibers which comes together with a section $\theta:U\ra P$.

The functor $\Sigma^{\infty}(-)(-1):\CM_1(U)\ra \MM^1(U)$ is fully faithful by \cite[Theorem 4.30]{phd_paper}, so that we can identify the morphism $j^*f:j^*A\ra j^* B$ modulo the isomorphisms above with a morphism $F:\widetilde{\BM}\ra \widetilde{\BN}$. Applying continuity for $\CM_1(-)$ and $\DA_c(-)$. Restricting $U$, we can also assume that the kernel $K$ and cokernel $Q$ of $F$ in the abelian category $\CM_1(\eta)$ extend to Deligne $1$-motives $\widetilde{K}$, $\widetilde{Q}$ over $U$. Consider the morphism $\Sigma^\infty F(-1)$. The cone of $\Cone(\eta^* \Sigma^\infty F(-1))\simeq \Cone(\eta^*f)$ fits into a distinguished triangle
\[
\eta^*\Sigma^\infty \widetilde{K}(-1)[1]\ra \eta^*\Cone(\Sigma^\infty \widetilde{F}(-1))\ra \eta*\Sigma^\infty \widetilde{Q}(-1)\rap.
\]
By continuity for $\DA(-)$, again by restricting $U$, we can assume there is a distiguished triangle
\[
\Sigma^\infty \widetilde{K}(-1)[1]\ra \Cone(\Sigma^\infty \widetilde{F}(-1))\ra \Sigma^\infty \widetilde{Q}(-1)\rap.
\]
By \cite[Theorem 4.21]{phd_paper}, we have $j_!\Sigma^\infty \widetilde{K}(-1)\in \MM^1(S)$ and $\ j_!\Sigma^\infty \widetilde{Q}(-1)\in \MM^1(S)$. This implies that the distinguished triangle
\[
j_!\Sigma^\infty \widetilde{K}(-1)[1] \ra j_!j^* \Cone(f) \ra j_!\Sigma^\infty\widetilde{Q}(-1)\rap
\]
is the truncation triangle of $j_!j^* \Cone(f)$ for $t^1_{MM}$, i.e., $H_1(j_!j^* \Cone(f))\simeq j_!\Sigma^\infty \widetilde{K}(-1)$ and $H_0(j_!j^* \Cone(f))\simeq j_!\Sigma^\infty \widetilde{Q}(-1)$.

Let us now prove that $i^*A$ and $i^*B$ lie in $\MM^1(Z)$; it is enough to do so for $i^*A$. By \cite[Proposition 4.14]{phd_paper}, since $A$ is t-non-negative, the motive $i^*A$ is t-non-negative. It remains to show it is t-non-positive. Applying $\omega^1 i^*$ to the colocalisation triangle, we get
\[
\omega^1 i^!A\ra i^*A\ra \omega^1 i^*j_*j^*A\rap.
\]
The functor $\omega^1 i^!$ is t-non-positive by \cite[Proposition 4.14]{phd_paper}, hence it is enough to show that $\omega^1 i^*j_* j^* A$ is t-non-positive. We have seen that $j^*A$ is of the form $\Sigma^\infty \widetilde{\BM}$. By Lemma~\ref{lemma:omega_chi} below, $\omega^1 i^*j_* j^* A$ is t-non-positive, and we conclude that $i^*A$ lies in the heart as claimed.

By \cite[Proposition 4.14]{phd_paper}, the motives $i_*i^*A$ and $i_*i^*B$ are also in the heart, and we see that the morphism of localisation triangles
\[
\xymatrix{
j_! \Sigma^\infty \widetilde{\BM}(-1) \ar[r] \ar[d]_{j_!\Sigma^\infty \widetilde{F}(-1)} & A \ar[r] \ar[d]_{f} & i_*i^*A \ar[d]_{i_*i^*f} \ar^{+}[r] & \\
j_! \Sigma^\infty \widetilde{\BM}'(-1) \ar[r] & B \ar[r] & i_*i^*B \ar^{+}[r] & \\
}
\]
is in fact a morphism of short exact sequences in $\MM^1(S)$, to which we can apply the Snake lemma and get a six term exact sequence
\begin{multline*}
  0\ra j_!\Sigma^\infty \widetilde{K}(-1)\ra H_1(\Cone(f))\ra H_1(\Cone(i_* i^*f))\\\ra j_!\Sigma^\infty \widetilde{Q}(-1) \ra 
 H_0(\Cone(f)) \ra H_0(\Cone(i_*i^* f))\ra 0.  
\end{multline*}
Again by Proposition \cite[Proposition 4.14]{phd_paper}, we have $H_1(\Cone(i_*i^*f))\simeq i_*H_1(\Cone(i^*f))$ and $H_0(\Cone(i_*i^*f))\simeq i_*H_0(\Cone(i^*f))$.
We can apply the induction hypothesis to $i^*A$ and the morphism $i^*f:i^*A\ra i^*B$ on the proper closed subset $Z$. We deduce that $H_1(\Cone(i^*f))$ and $H_0(\Cone(i^*f))$ are compact, and that there exists a locally closed stratification $(Z_\alpha)_{\alpha}$ of $Z$ such that for all $\alpha$, $Z_{\alpha}$ is regular and $i_{\alpha}^* A$ is of the form $\Sigma^\infty \BM_{\alpha}(-1)$ for some $\BM_{\alpha}(-1)$. 

Since $i_*$ preserves compact objects, $H_1(\Cone(i_*i^*f))$ and $H_0(\Cone(i_*i^*f))$ are compact. By adjunction and colocalisation, we have a sequence of isomorphisms
\begin{eqnarray*}
\DA(S)(i_*H_1(\Cone(i^*f)),  j_!\Sigma^\infty \widetilde{Q}(-1)) & \simeq & \DA(Z)(H_1(\Cone(i^*f)), i^!j_!\Sigma^\infty \widetilde{Q}(-1)) \\
& \simeq & \DA(Z)(H_1(\Cone(i^*f)), i^*j_*\Sigma^\infty \widetilde{Q}(-1)[-1]) \\
& \simeq & \DA(Z)(H_1(\Cone(i^*f)), \omega^1 i^*j_*\Sigma^\infty \widetilde{Q}(-1)[-1]).
\end{eqnarray*}
Again by Lemma~\ref{lemma:omega_chi} below, the motive $\omega^1 i^*j_*\Sigma^\infty \widetilde{Q}(-1)$ is t-non-positive. Since the motive $H_1(\Cone(i^*f))$ is t-non-negative, we deduce that the morphism group above vanishes. This shows that the six term exact sequence above splits into two short exact sequences,
\[
0\ra j_!\Sigma^\infty \widetilde{K}(-1)\ra H_1(\Cone(f))\ra H_1(\Cone(i_* i^*f))\ra 0
\]
and
\[
0\ra j_!\Sigma^\infty \widetilde{Q}(-1)\ra H_0(\Cone(f)) \ra H_0(\Cone(i_*i^* f))\ra 0.
\]
We have proved that the outer terms in both those sequences are compact, and we deduce that $H_1(\Cone(f))$ and $H_0(\Cone(f))$ are compact. This completes the proof of $(\Sch_d)$, modulo Lemma~\ref{lemma:omega_chi} below.

\begin{lemma}\label{lemma:omega_chi}
Assume that $(\Sch_{d-1})$ holds and that $S$ is a noetherian excellent finite dimensional scheme satisfying resolution of singularities by alterations. Let $j:U\ra S$ be an open immersion with $U_{\red}$ regular and $i:Z\ra S$ the complementary reduced closed immersion.  Let $\BM\in \CM_1(U)$ and $M=(\Sigma^\infty \BM)(-1)$. Assume moreover that the abelian scheme part of $\BM$ is a direct factor of the Jacobian scheme of a smooth projective curve with geometrically connected fibres. Then the motive $\omega^1 i^*j_* M$ is $t^1_{\MM}(Z)$-non-positive.
\end{lemma}

The end of the proof of \ref{main_theorem} is thus entirely devoted to the proof of Lemma~\ref{lemma:omega_chi}. 

\noindent
\underline{Reduction to normal crossings via alterations and stable curves:}

By localisation, we have a distinguished triangle
\[
\omega^1 i^!M\ra i^*M \ra \omega^1i^*j_*j^*M\rap
\]
and the motive $\omega^1i^!M$ is t-non-positive by \cite[Proposition 4.14]{phd_paper}. It is thus enough to prove that $\omega^1 i^*j_*M$ is t-non-positive.

Using Lemma \ref{lemma:radicial}, we can assume that $S$ is reduced. Let us show that we can in fact assume $S$ to be integral. Let $q:\widetilde{S}\ra S$ be the normalisation morphism. Since $U$ is assumed to be regular, $q$ is an isomorphism above $U$. Consider the diagram of schemes with cartesian squares
\[
\xymatrix{
  U\ar@{=}[d] \ar[r]_{\tilde{\jmath}} & \widetilde{S} \ar[d]_{q} & \widetilde{Z} \ar[l]^{\tilde{\imath}} \ar[d]_{q_Z} \\
  U \ar[r]_{j} & S & Z \ar[l]^{i}.
}
\]
By proper base change and \cite[Proposition 3.3 (iii)]{phd_paper}, we have
\begin{eqnarray*}
  \omega^*i^*j_*j^*M & \simeq & \omega^1i^*q_* \widetilde{\jmath}_* j^{*}M \\
  & \simeq & \omega^1q_{Z*}\omega^1 \widetilde{\imath}^*\widetilde{\jmath}_* j^{*}M.
\end{eqnarray*}
The functor $\omega^1\pi_{Z*}$ is t-non-positive \cite[Proposition 4.14]{phd_paper}. So it is enough to prove Lemma \ref{lemma:omega_chi} in the case $S$ is integral.

We want to improve the geometric situation using alterations. 

\begin{lemma}\label{lemma:alt_res}
There exists a projective alteration $\pi:S'\ra S$ such that
\begin{itemize}
\item $S'$ is regular, and 
\item $C\times_{S} S'$ extends to a projective semi-stable curve $\bar{f}':\bar{C}'\ra S'$ with geometrically connected fibers such that $\bar{C}'$ is regular and $\sigma'=\sigma\times_{S} S'$ extends to a section $\bar{\sigma}'$ of $\bar{C}'/S'$, which lands in the smooth locus.
\end{itemize}
\end{lemma}
\begin{proof}
  The pair $(C,\sigma)$ determines a $U$-point of the stack $\overline{\CM}_{g,1}$ of genus $g$ stable curves with a section. By a standard argument using the existence and properness of the moduli stack $\overline{\CM}_{g,1}$, there exists a projective alteration $\pi_1:S_1\ra S$ (with $S_1$ again integral) such that, if we write $U_1=U\times_S S_1$, $C_1=C\times_S S_1$ and so on, the pair $(C_1,\sigma_1)$ extends to a point $(\bar{C}_1,\bar{\sigma}_1)\in\overline{\CM}_{g,1}$. Note that, by definition of $\overline{\CM}_{g,1}$, such a curve still has geometrically connected fibers. In particular, $\bar{\sigma}_1$ factors through the $S_1$-smooth locus $\bar{C}_1^{\sm}$ of $\bar{C}_1$. By \cite[Lemma 5.7]{DeJong_families}, by doing a further projective alteration of $S_1$, we can also assume that $\bar{C}_1$ is quasi-split over $S_1$ in the sense of loc. cit., i.e., that on every fiber the singular points and the tangents to the singular points are rational.

  The closed subset $f_1(\mathrm{Sing}(\bar{C}_1))$ is a proper closed subset since $\bar{C}_1$ is generically smooth over a generically regular scheme $S_1$. By resolution of singularities by alterations applied to the pair $(S_1,f_1(\mathrm{Sing}(\bar{C}_1)))$, which is possible by hypothesis on $S$, there exists a projective alteration $\pi_2:S_2\ra S_1$ with $S_2$ integral and regular, and with $D:=\pi_2^{-1}(f_1(\mathrm{Sing}(\bar{C}_1)))$ a strict normal crossings divisor. Put $U_2=U\times_S S_2$ and so on. Then $(\bar{C}_2,\bar{\sigma}_2)\in \overline{\CM}_{g,1}(S_2)$ is still a quasi-split stable curve, which is moreover smooth outside of the strict normal crossings divisor $D$.

By \cite[Proposition 5.11]{DeJong_families}, there exists a projective modification $\phi_3:\bar{C}_3\ra \bar{C}_2$ which is an isomorphism outside of $\mathrm{Sing}(\bar{C}_2)$ (so in particular over $\bar{C}^{\sm}_2$, and over $S_2\setminus D$ via $\bar{f}_2$), and such that $\bar{C}_3$ is regular and the composite $\bar{f}_3=\bar{f}_2\circ \phi_3:\bar{C}_3\ra S_2$ is a projective semi-stable curve. Since $\phi_3$ is an isomorphism on $\bar{C}^{\sm}_2$, the section $\bar{\sigma}_2$ lifts to a section $\bar{\sigma}_3:S_2\ra \bar{C}_3$ of $\bar{f}_3$.

We now put $S'=S_2$, $\bar{C}'=\bar{C}_3$, $\bar{f}'=\bar{f}_3$, and $\bar{\sigma}'=\bar{\sigma}_3$. These satisfy all the requirements of the conclusion of the lemma.
\end{proof}

Let us fix an alteration $\pi$ as in Lemma \ref{lemma:alt_res}. Consider the following diagram of schemes with cartesian squares.
\[
\xymatrix{
  V' \ar[d]_{\pi_V} \ar[r]_{j'} & S' \ar[d]_{\pi} & Z' \ar[l]^{i'} \ar[d]_{\pi_Z} \\
  V \ar[r]_{j} & S & Z \ar[l]^{i}
}
\]
By construction, the morphism $\pi_V$ is quasi-\'etale. By \cite[Lemme 2.1.165]{Ayoub_thesis}, we have that $j^*M$ is a direct factor of $\pi_{V*}\pi_V^*j^*M$. We are thus reduced to show that $\omega^*i^*j_*\pi_{V*}\pi_V^*j^*M$ is t-non-positive. By proper base change and \cite[Proposition 3.3 (iii)]{phd_paper}, we have
\begin{eqnarray*}
  \omega^*i^*j_*\pi_{V*}\pi_V^*j^*M & \simeq & \omega^1i^*\pi_* j'_* \pi_V^*\Sigma^\infty \CM(-1) \\
  & \simeq & \omega^1\pi_{Z*}\omega^1 i'^*j'_* \Sigma^\infty \pi_V^*\CM(-1).
\end{eqnarray*}

The functor $\omega^1\pi_{Z*}$ is t-non-positive \cite[Proposition 4.14]{phd_paper}, and $\pi_V^*\CM(-1)$ is a Deligne $1$-motive. Hence we can assume that $S'=S$ and that $C$ itself has an extension $\bar{f}:\bar{C}\ra S$ satisfying the conclusions of Lemma~\ref{lemma:alt_res}.

By the distinguished triangles associated to the weight filtration of $\CM$, we can treat each piece separately and assume that $\CM$ is pure. As usual, the abelian variety case is the most complicated, and we give the argument only for that case.

\noindent
\underline{Degeneration of the Jacobian and conclusion of the proof of Lemma \ref{lemma:omega_chi}:}

We assume $\CM=\Jac(C/V)$. Let us fix some notation in the following diagram with cartesian squares.

\[
\xymatrix{
  C \ar[r]_{\tilde{\jmath}} \ar[d]_{f} & \bar{C} \ar[d]_{\bar{f}} & \partial C \ar[d]_{\partial f} \ar[l]^{\tilde{\imath}}\\
  U \ar[r]_{j} & S & Z \ar[l]^{i}
}
\]
By \cite[Corollary 3.20]{phd_paper}, the motive $\Sigma^\infty \Jac(C/U)(-1)[-1]$ is a direct factor of $f_*\BQ_C[+1]$, via the section $\sigma$ (we use the fact that $C/U$ has geometrically connected fibers). More precisely, using the adjunction $(\sigma^*,\sigma_*)$ and $f\sigma=\id$, we get a map
\[
\pi_0:f_*\BQ_C\ra f_*\sigma_*\sigma^* \BQ_C\simeq \BQ_U.
\]
From \cite[Corollary 3.20]{phd_paper}, we deduce that $\Sigma^\infty \Jac(C/U)(-1)[-2]$ is a direct factor of $\mathrm{Fib}(\pi_0)$, hence that $\Sigma^\infty \Jac(C/U)(-1)[-1]$ is a direct factor of $\mathrm{Fib}(\pi_0[+1])$. So it is enough to show that that $\omega^1 i^*j_* \mathrm{Fib}(\pi_0[+1])\simeq \mathrm{Fib}(\omega^1 i^*j_*\pi_0)[+1]$ is t-non-positive.

By base change and \cite[Proposition 3.3 (iii)]{phd_paper}, we have $\omega^1i^*j_* f_*\BQ_C\simeq \omega^1 (\partial f)_*\omega^1\tilde{\imath}^*\tilde{\jmath}_*\BQ_{C}$. Moreover, if we write $\partial\pi_0:\partial f_*\BQ_{\partial C}\ra \BQ_Z$ obtained from $\partial \sigma$ in the same fashion as $\pi_0$ was obtained from $\sigma$, the following square
\[
\xymatrix{
\omega^1 (\partial f)_* \ar[r] \ar[d]_{\omega^1 \partial\pi_0} & \omega^1 i^* j_* f_*\BQ[+1] \ar[d]_{\omega^1i^*j_*\pi_0} \\
\BQ[+1] \ar[r] & \omega^1 i^* j_*\BQ[+1]
}
\]
commutes. Applying localisation, we then can complete it into the following diagram with distinguished rows.
\[
\xymatrix{
  \omega^1 (\partial f)_* \BQ[+1]\ar[r] \ar[d]_{\omega^1 \partial\pi_0} & \omega^1 i^* j_* f_*\BQ[+1] \ar[d]_{\omega^1i^*j_*\pi_0} \ar[r]
 &\omega^1 \partial f_* \omega^1 \tilde{\imath}^!\BQ[+2] \ar[d]  \ar[r]^{+} &\\
\BQ[+1] \ar[r] & \omega^1 i^* j_*\BQ[+1] \ar[r] & \omega^1 i^!\BQ[+2] \ar[r]^{+} & .
}
\]
By Lemma~\ref{lemma:shriek_purity} below, the rightmost terms of both triangles are both t-non-positive. We deduce that the central terms lie in $\DA^1(Z)_{\leq 1}$, and that to show that $\omega^1 i^*j_* \mathrm{Fib}(\pi_0)$ is non-positive, it is enough to prove that the morphism
\[
H_1(\omega^1 i^* j_* f_*\BQ[+1]) \ra H_1(\omega^1 i^*j_*\BQ[+1])
\]
is injective. From the triangle and Lemma~\ref{lemma:shriek_purity} below, we see that
\[
H_0(\omega^1\partial f_*\BQ)\simeq H_1(\omega^1\partial f_*\BQ[+1])\simeq H_1(\omega^1 i^* j_* f_*\BQ[+1])
\]
and
\[
\BQ\simeq H_1(\BQ[+1]) \simeq H_1(\omega^1 i^*j_*\BQ[+1]).
\]
So we have to prove that the morphism $H_0(\omega^1 \partial f_*\BQ[+1])\ra \BQ$ induced by $\partial \sigma$ is injective. We prove that it is in fact an isomorphism. First of all, since $\partial f$ is a proper curve, we have $\partial f_*\BQ\in \DA^1_c(Z)$, so that $\omega^1\partial f_*\BQ \simeq \partial f_*\BQ$. By \cite[Proposition 3.24]{Ayoub_Etale}, it is enough to show that, for all points $z\in Z$, the morphism $z^*H_0(\partial f_*\BQ_{\partial C}) \ra \BQ_z$ is an isomorphism. We have $\dim(Z)<\dim(S)$, so by the induction hypothesis and Lemma~\ref{lemma:str_pull}, the pullback functor $z^*$ t-exact for the $1$-motivic t-structures. We thus have $z^*H_0(\partial f_*\BQ)\simeq H_0(z^*\partial f_*\BQ)\simeq H_0(\partial (f_z)_*\BQ)$, with $\partial f_z:\bar{C}_z\ra z$. The curve $\bar{C}_z$ is geometrically connected, hence the morphism $H_0(\partial f_{z*})\ra \BQ$ induced by the point $\bar{\sigma}_z\in \bar{C}_z(z)$ is an isomorphism. This concludes the proof.
\end{proof}

The following lemma was used in the proof of Theorem \ref{main_theorem}.

\begin{lemma}\label{lemma:shriek_purity}
  Let $S$ be a regular scheme, and $i:Z\ra S$ be a closed immersion with $Z$ reduced and nowhere dense. Let $k_0:Z_0\ra Z$ be an open immersion with $Z_0$ regular of pure codimension $1$ in $S$, containing all points of $Z$ which are of codimension $1$ in $S$ (i.e., all generic points of irreducible components of $Z$ which are divisors); note that $Z_0$ can be empty. Let $M\in \MM^{1,\sm}_c(S)$ be a smooth constructible $1$-motive. Then
  \[
\omega^1i^!M\simeq \omega^1(k_{0*}k_0^*i^*M(-1))[-2]\simeq \omega^0(k_{0*}k_0^*i^*M)(-1)[-2].
\]
In particular, $\omega^1i^!\BQ[+2]$ is $t^1_{\MM}$-non-positive.
\end{lemma}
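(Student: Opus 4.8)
The plan is to run a localisation along the open subscheme $Z_0\subset Z$: on the codimension-one locus $Z_0$ absolute purity produces exactly the twist-and-shift $(-1)[-2]$ of the statement, the complementary closed subscheme $Z_1=Z\setminus Z_0$ has codimension $\geq 2$ in $S$ and its contribution is annihilated by $\omega^1$, and the replacement of $\omega^1$ by $\omega^0$ in the second isomorphism comes from analysing how $\omega^1$ and $\omega^0$ interact with Tate twists over a field. Concretely, write $k_1\colon Z_1\ra Z$ for the reduced closed complement of $k_0$ and $i_0=i\circ k_0$, $i_1=i\circ k_1$. From the colocalisation triangle
\[
k_{1*}i_1^!M\ra i^!M\ra k_{0*}k_0^*i^!M\rap,
\]
the identification $k_0^*i^!M\simeq i_0^!M$ ($k_0$ being open), and absolute purity for the regular closed immersion $i_0\colon Z_0\ra S$ of pure codimension one (which gives $i_0^!M\simeq i_0^*M(-1)[-2]=k_0^*i^*M(-1)[-2]$), one sees, on applying the triangulated functor $\omega^1$, that the first isomorphism of the lemma is equivalent to the vanishing $\omega^1k_{1*}i_1^!M=0$.

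I would prove this vanishing by induction on $\dim Z_1$. As $M$ is smooth it is dualisable, so the projection formula gives $i_1^!M\simeq i_1^!\BQ_S\otimes i_1^*M$ with $i_1^*M$ a smooth $1$-motive on $Z_1$. Two formal observations reduce us to the case $Z_1$ regular: $\omega^1$ commutes with pushforward along a closed immersion (these are finite, and finite pushforward preserves $\DA^1$, cf.\ the proof of Lemma \ref{lemma:radicial}), and $\omega^1(l_*N)=0$ whenever $l$ is an open immersion with $\omega^1N=0$ (immediate from $l^*\dashv l_*$ and the fact that $l^*$ preserves $\DA^1$); hence, by splitting off a dense regular open of $Z_1$ with lower-dimensional closed complement and running the localisation triangle over $Z_1$, we may assume $Z_1$ regular. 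Then $i_1$ is a regular closed immersion of some codimension $c\geq 2$, absolute purity gives $i_1^!M\simeq i_1^*M(-c)[-2c]$, and it remains to show that $\omega^1$ annihilates the $\BQ(-c)$-twist of a smooth $1$-motive for $c\geq 2$. By \cite[Proposition 3.24]{Ayoub_Etale} this is checked at the points of $Z_1$, where Proposition \ref{prop:field} identifies everything with Deligne $1$-motives; there it is a weight computation, the weight-graded pieces of such a twist being negative Tate twists $N'\otimes\BQ(-m)$ of effective motives with $m\geq c\geq 2$, so that every $\Hom$-group from a generator of $\DA^1$ into one of them is a motivic cohomology group in a strictly negative Tate twist, hence $0$. (The hypothesis $c\geq 2$ is essential: at codimension one the abelian part of the $1$-motive contributes a piece with $m=1$, which $\omega^1$ does not annihilate.)

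For the second isomorphism one must identify $\omega^1\bigl(k_{0*}(N(-1))\bigr)$ with $\omega^0(k_{0*}N)(-1)$, where $N=k_0^*i^*M$ is a smooth $1$-motive on $Z_0$. Reducing pointwise once more (\cite[Proposition 3.24]{Ayoub_Etale}, Proposition \ref{prop:field}), this becomes the statement over a field that the $1$-motivic part of the $\BQ(-1)$-twist of a smooth $1$-motive equals the $\BQ(-1)$-twist of its $0$-motivic part; the same weight-and-negative-twist analysis governs which weight-graded pieces of the twisted motive are seen by $\DA^1$ and which only by $\DA^0$. This bookkeeping of $\omega^1$ and $\omega^0$ against Tate twists is the main obstacle; everything else is formal localisation and purity.

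Finally, the last assertion follows by specialising to $M=\BQ$: the lemma's isomorphisms then read $\omega^1i^!\BQ[+2]\simeq\omega^1\bigl((k_{0*}\BQ_{Z_0})(-1)\bigr)\simeq\omega^0(k_{0*}\BQ_{Z_0})(-1)$, and this is $t^1_{\MM}(Z)$-non-positive because $k_{0*}\BQ_{Z_0}$ is (being the pushforward of a sheaf along an open immersion), $\omega^0$ is right $t$-exact (\cite[Proposition 4.14]{phd_paper}), and $-\otimes\BQ(-1)$ is right $t$-exact.
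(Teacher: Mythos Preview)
Your overall strategy---localisation along $Z_0\subset Z$, absolute purity on the codimension-one regular stratum, and killing the higher-codimension contribution via $\omega^1$---is exactly the paper's. The paper packages the stratification argument you run by induction into a single filtration $\emptyset=\bar Z_{m+1}\subset\cdots\subset\bar Z_0=Z$ with regular equidimensional strata, but this is cosmetic. The substantive difference is that where you argue pointwise (via \cite[Proposition~3.24]{Ayoub_Etale} and a weight computation over fields), the paper simply invokes two general facts already established in \cite{phd_paper}: the blanket vanishing $\omega^1(N(-2))=0$ for every $N\in\DA^{\coh}$, and the natural isomorphism $\omega^1\bigl(-(-1)\bigr)\simeq\omega^0(-)(-1)$ of \cite[Corollary~3.9(iv)]{phd_paper}. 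Citing these removes the need for your pointwise reductions and the projection-formula detour entirely.

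Your argument for the second isomorphism has a small gap: to reduce an isomorphism to points you first need a comparison map, and you do not write one down. The natural candidate is the composite $\omega^0(k_{0*}N)(-1)\to k_{0*}N(-1)\to\omega^1\bigl(k_{0*}N(-1)\bigr)$ coming from the counit and the observation that a $(-1)$-twist of a $0$-motive lies in $\DA^1$; once this is in hand your pointwise check is fine, but at that point you have essentially reproved \cite[Corollary~3.9(iv)]{phd_paper}. Similarly, for the final $t$-non-positivity you appeal to right $t$-exactness of $(-)\otimes\BQ(-1)$ on $0$-motives, which is true but not something you justify; the paper instead observes $\BQ(-1)\in\MM^1(Z_0)$ and applies the already-established $t$-non-positivity of $\omega^1 k_{0*}$ from \cite[Proposition~4.14(v)]{phd_paper} directly, which is cleaner.
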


\begin{proof}
  Extend $Z_0$ into a stratification $\emptyset=\bar{Z}_{m+1}\subset \bar{Z}_{m}\subset\ldots\subset \bar{Z}_0=Z$ by closed subsets with $Z_i:=\bar{Z}_i\setminus \bar{Z_{i-1}}$ regular and equidimensional. Write $c_i$ for the codimension of $Z_i$ in $S$; by hypothesis, we can arrange that $c_i\geq 2$ for $i>0$. Write $Z_i\stackrel{k_i}{\ra} \bar{Z}_i\stackrel{l_i}{\leftarrow} \bar{Z}_{i+1}$. By localisation, we have distinguished triangles
\[
l_{0*}l_0^! i^!M \ra i^!M \ra k_{0*} k_0^! i^!M  \rap
\]
\[
l_{1*}l_1^!l_0^! i^!M \ra l_0^!i^! \ra k_{1*}k_1^! l_0^!M\rap
\]
\[
\dots
\]
\[
l_{m*}l_m^!l_{m-1}^!\dots i^!M l_{d-1}^!\dots i^!M \ra k_{m*}k_m^!l_{d-1}^!\dots i^!M
\]
For all $i$, since $S$ and $Z_i$ are regular, the immersion $k_i= i\circ l_1\circ\dots l_{i-1} :Z_i\ra S$ is a regular immersion. By absolute purity in the form of \cite[Proposition 1.7]{phd_paper} for the smooth motive $M$, we deduce that $k_i^!M\simeq k_i^*M(-c_i)[-2c_i]$. Combining this formula with the triangles above and the fact that $\omega^1M(-2)=0$ for any $M\in \DA^{\coh}$ implies that $\omega^1 i^!M\simeq \omega^1(k_{0*}k_0^*i^*M(-1)[-2])$.

We have $\omega^1 (k_{0*}k_0^*i^*M(-1))\simeq \omega^0(k_{0*}k_0^*i^*M\BQ)(-1)$ by \cite[Corollary 3.9 (iv)]{phd_paper}.

We have $\BQ(-1)\in\MM^1(Z_0)$. By \cite[Proposition 4.14 (v)]{phd_paper}, the motive $\omega^1k_*(\BQ(-1))$ is then $t^1$-negative. This concludes the proof.\end{proof}

\bibliographystyle{amsplain}
\bibliography{constructible}

\medskip \medskip

\noindent{Freie Universit\"{a}t Berlin, Arnimallee 3, 14195 Berlin, Germany} 

\medskip \noindent{\texttt{simon.pepin.lehalleur@gmail.com}}

\end{document}